\documentclass[a4paper,10pt]{amsart}
\usepackage[english]{babel}
\usepackage[a4paper, margin=2cm]{geometry}
\usepackage{graphicx}
\usepackage{amsmath, amsthm, amssymb}
\usepackage{enumitem}
\setlist[1,enumerate]{label={(\roman*)}}
\setlist[1]{leftmargin=1.5em}
\setlist[2,enumerate]{label={(\alph*)}}
\setlist{noitemsep, topsep=\smallskipamount, listparindent=1em}
\usepackage{hyperref}
\hypersetup{
    colorlinks=true,
    citecolor=blue
}
\usepackage{tikz-cd}

\def\floor#1{\left\lfloor#1\right\rfloor}
\def\ceil#1{\left\lceil#1\right\rceil}

\let\kve\Q
\let\er\R
\let\ce\C
\let\pe\P
\def\O{\mathcal O}
\def\oct{\mathbb O}

\def\reg{\text{reg}}

\let\phi\varphi

\def\zav#1{\left(#1\right)}
\def\set#1{\left\{#1\right\}}
\def\gener#1{\left\langle#1\right\rangle}
\def\mtrx#1{\begin{pmatrix}#1\end{pmatrix}}
\let\surjto\twoheadrightarrow
\let\injto\hookrightarrow

\DeclareMathOperator{\Hom}{Hom}
\DeclareMathOperator{\rank}{rank}
\DeclareMathOperator{\im}{im}
\DeclareMathOperator{\codim}{codim}
\DeclareMathOperator{\RND}{RND}
\DeclareMathOperator{\ND}{ND}
\DeclareMathOperator{\Seg}{Seg}
\DeclareMathOperator{\GR}{GR}
\DeclareMathOperator{\GL}{GL}
\DeclareMathOperator{\SL}{SL}
\DeclareMathOperator{\pr}{pr}

\DeclareMathOperator{\sgn}{sgn}

\def\uv#1{``#1''}

\newtheorem{theorem}{Theorem}

\newtheorem{lemma}[theorem]{Lemma}
\newtheorem{corollary}[theorem]{Corollary}
\theoremstyle{definition}
\newtheorem{definition}[theorem]{Definition}
\newtheorem{example}[theorem]{Example}
\theoremstyle{remark}
\newtheorem*{remark}{Remark}

\title{On degenerate geometric rank}

\author{Matěj Doležálek}
\address[Matěj Doležálek]{Fachbereich Mathematik und Statistik, Universität Konstanz, Konstanz, Germany}
\email{matej.dolezalek@uni-konstanz.de}

\author{Paweł Pielasa}
\address[Paweł Pielasa]{Department of Pure Mathematics and Mathematical Statistics, University of Cambridge, Wilberforce Road Cambridge CB3 0WB, United Kingdom}
\email{pp554@cam.ac.uk}
\curraddr{Department of Mathematics, Princeton University, Washington Rd, Princeton, NJ 08540, United States}
\email{pp2629@princeton.edu}

\author{Derek Wu}
\address[Derek Wu]{Texas A\&M University, College Station, Texas, USA}
\email{dwu120@tamu.edu}

\subjclass[2020]{Primary: 15A69, Secondary: 14N07, 14M12.}
\keywords{tensor, geometric rank, liftability}

\begin{document}

\begin{abstract}
    A tensor of degenerate geometric rank corresponds to a linear space of matrices in which some locus of matrices of rank at most $r$ has unexpectedly large dimension. We prove a sufficient criterion for when such a~tensor cannot be lifted to a larger tensor and characterize those tensors for which the degeneracy is caused by the rank $r=1$ locus. We also provide new examples of tensors with degenerate geometric rank stemming from nonlinear loci.  
\end{abstract}

\maketitle

\section{Introduction}

Tensors play an increasingly large role in mathematics and computer science.
Various notions of \emph{rank} of tensors have been developed for different applications; we refer the reader to \cite{landsberg2011tensors} for a broad introduction. Kopparty, Moshkovitz and Zuiddam recently introduced \emph{geometric rank} \cite{kopparty-moshkovitz-zuiddam}, motivated by algebraic complexity theory, extremal combinatorics, and quantum information theory. We state its definition in Section~\ref{GR}; informally, it measures unexpectedly large dimensions of rank loci within the corresponding space of matrices.
Like many other notions of rank, geometric rank generalizes one of the definitions of matrix rank: namely, the fact that matrix rank is the codimension of both the left and right null spaces. Geometric rank also relates closely to \emph{slice rank} and \emph{analytic rank} \cite{cohen-moshkovitz}, notions motivated respectively by combinatorial and statistical questions. 

Given a tensor $T\in \ce^m\otimes \ce^m\otimes \ce^m$, we denote its geometric rank by $\GR(T)$.
We always have $\GR(T)\leq m$, and if $\GR(T)<m$, we say $T$ is of \emph{degenerate geometric rank}.
The study of tensors of degenerate geometric rank, initiated in \cite{geng-landsberg} and continued in \cite{geng}, is useful for the above mentioned areas, since geometric rank is an upper bound for \emph{border subrank}, which measures a tensor's computational ``value''.
Additionally, being of bounded geometric rank is a Zariski closed condition, so the varieties of tensors of degenerate geometric rank may be of interest in their own right.

In the study of geometric rank, it is natural to identify tensors with linear spaces of matrices. In this setting, spaces of matrices of bounded rank are a particular way of achieving degenerate geometric rank, at least for balanced tensors. Such spaces are a classical object of study, dating back to at least Dieudonn\'e in 1949 \cite{dieudonne} and Flanders in 1962 \cite{flanders}. 
Classification results are known up to rank four \cite{atkinson1980large, atkinson1981primitive, atkinson1983primitive, huang-landsberg}. Such classification efforts involve the notion of \emph{liftability}, or whether a space of bounded rank is just a subspace of a larger space of bounded rank. Unliftability of a space may often be verified by a sufficient criterion due to Draisma \cite{draisma}. In principle though, tensors of degenerate geometric rank ought to be a much broader class than spaces of bounded rank, hence one of our interests in this article is to explore examples of degenerate geometric rank where the degeneracy does not stem from a linear space.

Within the tensor geometric community, a particular interest is usually paid to \emph{matrix multiplication} tensors $M_{\gener n}$ on account of their importance to complexity theory. Indeed, the original introduction of geometric rank in \cite{kopparty-moshkovitz-zuiddam} used the new notion to upper-bound the border subrank of matrix multiplication tensors with $\GR(M_{\gener n}) = \ceil{\frac34 n^2}$, proving equality in a lower bound due to Strassen \cite{strassen-subrank}.
This also particularly shows that the geometric rank of $M_{\gener n}$ is degenerate, and this degeneracy is caused by loci of high degree for large $n$ (see Example~\ref{ex:matmult}). To our knowledge, other tensors with degenerate geometric rank not stemming from a linear subspace of bounded rank have not been identified so far -- we present in the final section several examples of such behavior we were able to construct.

Another motivation for studying degenerate geometric rank relates to tensor flattenings. Well-established methods may be seen as stemming from particular spaces of matrices of bounded rank (cf. \cite[Proposition 8.2.3.1]{landsberg2017geometry}): as an example, the first Koszul flattening $\bigwedge^2 A^* \otimes B^* \to A^*\otimes C$ (see e.g. \cite[Sections 3.10.1, 3.10.2]{landsberg2011tensors} or \cite[Section 1.2]{landsberg-ottaviani}) relates to the space $A^*\injto A\otimes \bigwedge^2 A^* = \Hom\left(A^*, \bigwedge^2 A^*\right)$ of bounded rank $\dim A - 1$. The Koszul flattening then constitutes a linear inclusion of $A\otimes B\otimes C$ into the matrix space $\Hom\left(\bigwedge^2 A^* \otimes B^*, A^*\otimes C\right)$. One may hope then to use more general degenerate rank loci to construct good nonlinear inclusions leading to new nonlinear flattenings (cf. \cite{dolezalek2026nonlinearmethodstensorsdeterminantal}).

\null

Our paper is structured as follows. In Section~\ref{sec:liftability}, we investigate when a degenerate rank locus of a tensor {lifts} to a larger tensor. We define the set of \emph{rank neutral directions} and prove that when these coincide with the space of matrices given by the tensor, the degenerate rank locus cannot lift -- see Definition~\ref{def:rndr} and Theorem~\ref{thrm:main-liftability}. This generalizes analogous results of Draisma \cite{draisma} on spaces of matrices of bounded rank. A limited implementation of the criterion in SageMath is available at \cite{code-repo}.

In Section~\ref{sec:rank-one-degeneracies}, we investigate when degenerate geometric rank may be caused by a tensor's rank $1$ locus. This phenomenon may be seen as the opposite extreme to spaces of matrices of bounded rank, where a very large locus, namely the whole space, is the culprit. In Theorem~\ref{thrm:pawel}, we prove that either the corresponding space of matrices contains a linear space of rank $1$ matrices, or a space equivalent to $2\times 2$ symmetric matrices. Since in the latter case, the rank $2$ locus on its own also causes the degenerate geometric rank, the theorem implies that degenerate $\GR(T)$ cannot be caused solely by the rank $1$ locus \uv{in an interesting way}.

Lastly, in Section~\ref{sec:examples} we list examples of nonlinearly degenerate geometric rank that we are aware of.

\section{Preliminaries}

\subsection{Conventions and notation}
We work over $\ce$.
We use both projective and affine spaces, so we consider $\pe^{m-1}$ as some $\pe V$ for an $m$-dimensional vector space $V$.
For any nonzero $v\in V$, we use $[v]$ to denote the point in $\pe V$ corresponding to the line $\ce v$.
For $X\subseteq\pe V$ a projective scheme, we denote by $\hat X\subseteq V$ its affine cone and by $X_\reg$ its regular locus.
We write the embedded tangent space at $x\in X$ as $T_xX$ and the affine tangent cone as $\hat T_x X$.
For a set $S\subset\pe V$, let $\gener S\subset \pe V$ denote its projective span.

Let $A$, $B$, and $C$ all be complex vector spaces of dimension $m$; we use $A^*$ to denote the dual vector space to $A$.
We denote the Segre variety as $\Seg(\pe B\times\pe C)\subseteq  \pe(B\otimes C)= \pe(\Hom(B^*, C))$.
The $r$-th secant variety of a projective variety $X$ is the Zariski closure of the union of projective spans of $r$-tuples of points on $X$, i.e.
\[
    \sigma_r(X) := \overline{\bigcup_{x_1,\dots,x_r\in X}\gener{x_1,\dots,x_r}}.
\]
In particular, the set of matrices of rank at most $r$ in $\Hom(B^*,C)$ is precisely $\hat\sigma_{r}\Seg(\pe B\times\pe C)$.

In general for a tensor $T\in A\otimes B\otimes C$, there are three \emph{flattening maps} we can build from it:
\begin{gather*}
  T_A:A^*\to B\otimes C\quad\quad T_B:B^*\to A\otimes C\quad\quad T_C:C^*\to A\otimes B.
\end{gather*}
We say $T$ is \emph{concise} of all three of those flattening maps are injective.
Throughout this paper, we assume all tensors are concise.

Linear subspaces $E\subseteq B\otimes C = \Hom(B^*, C)$ of dimension $m$ correspond to tensors $T\in A\otimes B\otimes C$.
The correspondence is via identification $E=T_A(A^*)$ up to the $\GL(A)$-action on $A\otimes B\otimes C$.
Through this identification, we may refer to the image of the flattening map, the \emph{space of matrices}, and the tensor itself interchangeably.

\subsection{Geometric rank}\label{GR}

\begin{definition}
Consider a tensor $T\in A\otimes B\otimes C$ and its flattening $T_A:A^*\to B\otimes C$.
We define the \emph{rank $r$ locus of $T$} as
\[
    Y^r_T := \set{[\alpha]\in\pe A^* \mid \rank T_A(\alpha)\leq r}
    =\pe\Bigl(T_A^{-1}\bigl(T_A(A^*)\cap \hat\sigma_{r}\Seg(\pe B\times\pe C)\bigr)\Bigr)
    \subseteq\pe A^*.
\]
Then, the \emph{geometric rank} of $T$ is the integer
\[
    \GR(T) := \min_{r}\bigl(\codim_{\pe A^*}(Y^r_T)+r\bigr).
\]
We say the integer $r$ or the locus $Y^r_T$ \emph{achieves} $\GR(T)$ if $\GR(T)$ is attained by $\codim_{\pe A^*}(Y^r_T)+r$.
The tensor $T$ has \emph{degenerate geometric rank} if $\GR(T)<\min\set{\dim A,\dim B,\dim C}$.
\end{definition}
This is one of several equivalent ways to define $\GR(T)$; in particular, $\GR(T)$ does not change if we work with one of the other two flattenings instead of $T_A:A^*\to B\otimes C$. See \cite[Sections 2 and 3]{kopparty-moshkovitz-zuiddam} or \cite[Proposition/Definition~2.4]{geng-landsberg} for further details.

Note that the indexing of sets $Y^r_T$ is reversed compared to the sets $\Sigma^A_j$ used in \cite{geng-landsberg} and the notation for the \emph{$(A,j)$-geometric rank} $\GR_{A,j}$. The translation between these is as follows:
\begin{align*}
    \Sigma^A_j &= Y^{\min\set{\dim B,\dim C}-j}_T,\\
    \GR_{A,j} &= \dim A+\min\set{\dim B,\dim C}-1-\dim\Sigma^A_j-j \\&= \codim_{\pe A^*}\zav{Y^{\min\set{\dim B,\dim C}-j}_T}+(\min\set{\dim B,\dim C}-j).
\end{align*}

\begin{remark}
    The rank loci are related to the \emph{collineation varieties} of \cite{gesmundo2026collineation}. Namely, treating $T$ as a matrix whose entries are linear forms on $A^*$, its $(r+1)\times(r+1)$ minors define a linear system of divisors on $\pe A^*$.
    $Y_T^r$ is the base locus of this system, while the $(r+1)$-th collineation variety of $T$ is the closure of the image of the rational map $\pe A^* \dashrightarrow \pe\zav{\bigwedge^{r+1} B \otimes\bigwedge^{r+1} C}$ given by the system.

    Cf. also \cite[Section 6.2]{dolezalek2026nonlinearmethodstensorsdeterminantal} for connections to nonlinear flattenings of tensors.
\end{remark}

\section{Liftability of rank loci}
\label{sec:liftability}

When a tensor $T$ is of degenerate geometric rank, it may come from a restriction of a larger tensor $T'$.
If we wish to classify tensors of degenerate geometric rank, it would be ideal to only consider tensors that we cannot enlarge further.
We establish a sufficiency condition that can check if a tensor is \emph{unliftable}, generalizing the ideas presented by Draisma  in \cite{draisma}.

\begin{definition}
    A space of matrices $E\subseteq \Hom(B^*,C)$ is \emph{$r$-liftable} if it is contained within a larger space $E'\supsetneq E$ such that
    \[
        \codim_{\pe E}(\pe E\cap \sigma_{r}\Seg(\pe B\times\pe C)) = \codim_{\pe E'}({\pe E'}\cap \sigma_{r}\Seg(\pe B\times\pe C)).
    \]
    Then $E'$ is an \emph{$r$-lift of $E$} (or \emph{$E$ $r$-lifts to $E'$}).
    Otherwise, $E$ is \emph{$r$-unliftable}.
\end{definition}

\begin{definition}
    \label{def:rndr}
    Consider a space of matrices $E\subseteq \Hom(B^*,C)$
    and denote by $Y:= \pe E\cap\sigma_{r}\Seg(\pe B\times\pe C)$ the locus of matrices of rank at most $r$, considered as an intersection scheme-theoretically.
    Let $Y_1,\ldots, Y_k$ be the maximum-dimensional irreducible components of $Y$, and for each $i$, let $\tilde Y_i$ be the locus of points $[e]\in Y_i$ which are regular points of $Y$ such that $\rank(e) = r$.
    Then, we define the \emph{set of rank-$r$-neutral directions} of $E$ as
    \[
        \RND_r(E) := \bigcup_{1\leq i\leq k} \bigcap_{[e]\in \tilde Y_i} \zav{E + \set{M\in \Hom(B^*,C)\mid M(\ker e)\subseteq \im e}}\supseteq E.
    \]
    If $\tilde Y_i=\emptyset$, we interpret the corresponding intersection as the entire space $\Hom(B^*,C)$.
\end{definition}

\begin{theorem}
    \label{thrm:main-liftability}
    If $E$ $r$-lifts to an $E'=E+\ce v$, then $v\in \RND_r(E)$. If $E=\RND_r(E)$, then $E$ is $r$-unliftable.
\end{theorem}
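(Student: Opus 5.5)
The plan is to compare $Y=\pe E\cap\sigma_r$ with $Y'=\pe E'\cap\sigma_r$, where $\sigma_r$ abbreviates $\sigma_r\Seg(\pe B\times\pe C)$ and $E'=E+\ce v$. Since $\dim\pe E'=\dim\pe E+1$, equality of codimensions means $\dim Y'=\dim Y+1$.

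\textbf{Step 1: a component of $Y'$ sits over a component of $Y$.} Let $Z$ be an irreducible component of $Y'$ of dimension $\dim Y+1$. Since $\pe E$ is a hyperplane in $\pe E'$, every component of $Z\cap\pe E$ has dimension at least $\dim Z-1=\dim Y$. Also $Z\not\subseteq\pe E$, because $\dim Z>\dim Y$. Hence $Z\cap\pe E\subseteq Y$ contains some top-dimensional component $Y_i$ of $Y$.

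\textbf{Step 2: tangent spaces along $Y_i$.} Take a generic point $[e]\in Y_i$. We may assume $[e]\in\tilde Y_i$; if $\tilde Y_i=\emptyset$, the corresponding intersection is all of $\Hom(B^*,C)$ and there is nothing to show. Then $[e]$ is a regular point of $Y$ and $\rank e=r$. Since $\rank e=r$, the point $[e]$ is a smooth point of $\sigma_r$, and its affine tangent space is
\[
\hat T_{[e]}\sigma_r=\set{M\mid M(\ker e)\subseteq\im e}=:N_e .
\]
Scheme-theoretically, $\hat T_{[e]}Y=E\cap N_e$ and $\hat T_{[e]}Y'\subseteq E'\cap N_e$. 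Since $[e]$ is regular on $Y$, $\dim(E\cap N_e)=\dim Y+1$ affinely. Now $[e]\in Z$, and $\dim Z=\dim Y+1$, so
\[
\dim(E'\cap N_e)\ \ge\ \dim \hat T_{[e]}Y'\ \ge\ \dim Z+1=\dim Y+2.
\]
Thus $E'\cap N_e\supsetneq E\cap N_e$. Pick $w\in(E'\cap N_e)\setminus E$ and write $w=u+\lambda v$ with $u\in E$ and $\lambda\neq0$. Then $v\in E+N_e$.

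\textbf{Step 3: extending to all of $\tilde Y_i$.} Step 2 gives $v\in E+N_e$ only for $[e]$ in the dense open subset $Z\cap\tilde Y_i$ of $\tilde Y_i$. The condition $v\in E+N_e$ is closed in $[e]$ over the rank-$r$ locus. One way to see this is that it is equivalent to $\dim(E'+N_e)=\dim(E+N_e)$, and by the regularity assumption $\dim(E+N_e)$ is constant on $\tilde Y_i$, since $\dim(E\cap N_e)$ is constant there. Hence the condition holds on all of $\tilde Y_i$, which gives
\[
v\in\bigcap_{[e]\in\tilde Y_i}(E+N_e)\subseteq\RND_r(E).
\]

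\textbf{Step 4: the second statement.} Suppose $E$ has a lift $E''\supsetneq E$. Choose any $v\in E''\setminus E$ and consider $E'=E+\ce v$. Intersecting the locus in $\pe E''$ with successive hyperplanes shows that $E'$ is again an $r$-lift: the codimension can only drop or stay equal, and it is bounded below by the codimension in $\pe E$. Indeed $\codim$ is monotone here, because $\dim(Y''\cap\pe E')\ge\dim Y''-(\dim E''-\dim E')$, which forces equality throughout. The first part then gives $v\in\RND_r(E)=E$, a contradiction.

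\textbf{Main obstacle.} The delicate point is Step 3 together with the genericity in Step 2. We must ensure that $Z\cap\tilde Y_i$ is nonempty, that the tangent inclusion holds at such points even if $[e]$ is singular on $Y'$, and that the closedness argument is valid. My first attempt would use upper semicontinuity of $\dim(E'\cap N_e)$ on the smooth locus of $\sigma_r$ to pass from a dense subset to all of $\tilde Y_i$.
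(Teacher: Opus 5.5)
Your proof is correct and follows essentially the same route as the paper, namely Lemma~\ref{lem:general} specialized to $X=\sigma_r\Seg(\pe B\times\pe C)$ via Lemma~\ref{lem:secant-segre-tangent}: a $(\dim Y+1)$-dimensional component $Z$ of $\pe E'\cap\sigma_r\Seg(\pe B\times\pe C)$ contains a top-dimensional component $Y_i$ of $Y$, and the jump in tangent-space dimension at regular rank-$r$ points forces $v\in E+N_e$. Your Step~3 is unnecessary, because Step~1 already gives $Y_i\subseteq Z\cap\pe E\subseteq Z$, so Step~2 applies at every $[e]\in\tilde Y_i$ without semicontinuity; your Step~4, on the other hand, makes explicit the reduction from arbitrary lifts to one-dimensional ones that the paper leaves implicit in Corollary~\ref{cor:criterion}.
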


The Proposition may be useful either as a criterion of unliftability, or as a tool which hints at a possible lift.
In particular, if $Y$ is irreducible and $\RND_r(E)$ is a linear space of dimension $\dim E+1$, then the only possible lift is $E':=\RND_r(E)$ itself.
In practice, one wishes to only use finitely many $[e]\in\tilde Y_i$ in each of the intersections in the definition of $\RND_r(E)$: any such choice will give a superset of $\RND_r(E)$.
See Section~\ref{sec:examples} for examples of this.

We provide a limited implementation of the above criterion in SageMath at \cite{code-repo}; it only functions under the assumption that $Y=Y_1$ is irreducible and $\tilde Y_1$ is nonempty. It relies on the ability to randomly generate points $[e]\in Y$; this is justified by $\tilde Y_1$ being dense in $Y$. Working over $\kve$ to avoid floating-point arithmetic, it may not always be possible to choose rational points on $Y$, since the set of rational points might be difficult to work with or even empty. We therefore only solve this problem ad hoc for varieties $Y$ that occur in our cases of interest

\null

To prove Theorem~\ref{thrm:main-liftability}, let us generalize the approach of \cite[Section 3]{draisma}. As in there, we generalize by replacing $\sigma_{r} \Seg(\pe B\times\pe C)$ with an arbitrary projective variety. Consider a projective variety $X\subseteq \pe V$ and a~linear subspace $E\subseteq V$ with the property that $\codim_{\pe E}(X \cap \pe E) = m$. Whenever $E'\supseteq E$ is a larger space, $\codim_{\pe E'}(X\cap \pe E')$ will be at least $m$. We wish to provide a criterion for when $E$ is inclusion-maximal among spaces with $\codim_{\pe E}(X\cap \pe E)=m$.

\begin{definition}
    Let us say $E$ is \emph{$X$-liftable} (resp. \emph{$X$-unliftable}) if it is contained in some (resp. is not contained in any) $E'\supsetneq E$ such that
    \[
        \codim_{\pe E'} (X\cap \pe E') = m.
    \]
\end{definition}

\begin{definition}
    \label{def:NDX}
    Let $Y:=X\cap\pe E$ considered as a scheme-theoretic intersection, let $Y_1,\dots,Y_k$ be the maximum-dimensional irreducible components of $Y$ and denote $\tilde Y_i := Y_i\cap Y_\reg$.
    Then we define the set of \emph{$X$-neutral directions of $E$} as
    \[
        \ND_X(E) := \bigcup_{1\leq i\leq k}\ \bigcap_{e\in \tilde Y_i}(E+\hat T_{e}X).
    \]
\end{definition}
\begin{remark}
    \begin{enumerate}
    \item Note that in the simplest case when $Y=X\cap \pe E$ is irreducible, the set of $X$-neutral directions simplifies to
    $\ND_X(E) = \bigcap_{e\in Y_\reg} (E+\hat T_e X)$ and is a linear subspace of $V$.
    \item If a $\tilde Y_i$ is empty, we interpret the intersection indexed by it as the whole space $V$.
    \end{enumerate}
\end{remark}

Let us now fix $E$ and define
\[
    U := \set{v\in V\mid \text{$\codim_{\pe E'}(X\cap \pe E')=m$ for $E':= E+\ce v$}},
\]
this is an affine variety in $V$. Note that trivially $E\subseteq U$. On the other hand, we will bound $U$ by $\ND_X(E)$:

\begin{lemma}
    \label{lem:general}
    $U\subseteq \ND_X(E)$.
\end{lemma}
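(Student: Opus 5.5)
Fix $v\in U$ with $v\notin E$; if $v\in E$ there is nothing to show, since $E\subseteq\ND_X(E)$. Set $E':=E+\ce v$ and $Y':=X\cap\pe E'$. Then $\dim \pe E'=\dim\pe E+1$, and the hypothesis $\codim_{\pe E'}Y'=m$ gives $\dim Y'=\dim Y+1$. The plan is to find a component $Y_i$ such that $v\in E+\hat T_eX$ for every $e\in\tilde Y_i$. This puts $v$ in the $i$-th term of the union defining $\ND_X(E)$.

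The first step is to locate $Y_i$. Let $Z$ be an irreducible component of $Y'$ of maximal dimension $\dim Y+1$. Since $\pe E$ is a hyperplane in $\pe E'$, Krull's principal ideal theorem gives that every component of $Z\cap\pe E$ has dimension at least $\dim Y$. This intersection is nonempty, because a positive-dimensional projective variety meets every hyperplane; if $\dim Z=0$ then $\dim Y=-1$, which is a degenerate case to be handled separately. Also $Z\cap\pe E\subseteq Y$, and $Z\not\subseteq \pe E$ because $\dim Z>\dim Y$. So some component of $Z\cap \pe E$ is a maximum-dimensional component $Y_i$ of $Y$, and $Y_i\subseteq Z$.

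The second step is the tangent space argument. Take $e\in\tilde Y_i$; it suffices to treat a dense open subset of $\tilde Y_i$, because the conditions are closed, as explained below. Since $Z\subseteq X\cap\pe E'$, we have $\hat T_eZ\subseteq \hat T_eX\cap E'$. Now $Z$ has dimension $\dim Y+1$ and contains $Y_i$. If $e$ is a smooth point of $Z$, then $\dim \hat T_eZ\geq \dim Y+2$, whereas $\hat T_eZ\cap E\subseteq \hat T_e Y$. Here $\hat T_eY$ is the tangent space of the scheme $X\cap\pe E$, so $\hat T_eY=\hat T_eX\cap E$. This space has dimension $\dim Y+1$ because $e$ is a regular point of $Y$. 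Hence $\hat T_eZ\not\subseteq E$, so $\hat T_eX$ contains a vector $u\in E'\setminus E$. Writing $u=w+cv$ with $c\neq0$ gives $v\in E+\hat T_eX$. If $e$ is a singular point of $Z$, then the Zariski tangent space $\hat T_eZ$ has dimension larger than $\dim Z+1$, so the same dimension count works directly. Using scheme-theoretic tangent spaces throughout therefore avoids needing smoothness of $Z$ at all: $\dim \hat T_e Z\geq \dim Z+1=\dim Y+2>\dim(\hat T_eX\cap E)$.

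The main obstacle is the identity $\hat T_e(X\cap\pe E)=\hat T_eX\cap E$ together with the fact that regularity of $Y$ at $e$ gives $\dim \hat T_eY=\dim Y+1$. This must hold with $Y$ taken as a scheme; this is exactly why the definition uses the scheme-theoretic intersection and $Y_\reg$. The identity is the standard fact that Zariski tangent spaces of intersections are intersections of tangent spaces. One subtlety is that $e$ lies on $Y_i$ and is a regular point of $Y$, so $Y_i$ is the only component of $Y$ through $e$ and the local dimension there is $\dim Y$. With this, the inclusion $v\in E+\hat T_eX$ holds for all $e\in\tilde Y_i$, and so $v\in\ND_X(E)$.
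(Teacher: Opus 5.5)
Your proof is correct and follows the paper's argument: take a top-dimensional component $Z$ of $X\cap\pe E'$, observe that the hyperplane section $Z\cap\pe E$ contains a top-dimensional component $Y_i$ of $Y$, and at each $e\in\tilde Y_i$ compare $\dim\hat T_eZ$ with $\dim(\hat T_eX\cap E)=\dim\hat T_eY$ to obtain a vector of $\hat T_eX$ in $E'\setminus E$, which gives $v\in E+\hat T_eX$. Your aside about reducing to a dense open subset of $\tilde Y_i$ because ``the conditions are closed'' is never justified, but it is also never used, since your dimension count works at every point of $\tilde Y_i$.
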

\begin{proof}
    Let us consider an arbitrary $v\in U$ and prove it lies in $\ND_X(E)$. For $v\in E$ this is trivial, so let us presume $v\notin E$ and denote $E':= E+v\ce$. This has $\dim E' = \dim E+1$, and so by the definition of $U$, adopting the notation of Definition~\ref{def:NDX} and setting $d:=\dim Y$, $Z:=X\cap \pe E'$ (scheme-theoretically), we must have $\dim Z = d+1$. Therefore, $Z$ has some irreducible component $Z_1$ of dimension $d+1$.
    Further, $Y=Z\cap \pe E$ and $\pe E$ is a hyperplane in $\pe E'\supset Z$, hence $\dim (Z_1\cap \pe E) = d$. Thus one of the maximal-dimensional components of $Y$ is an irreducible component of $Z_1\cap \pe E$, without loss of generality we may assume it is $Y_1$.

    Consider any $e\in \tilde Y_1 = Y_1\cap Y_\reg$. Points in the intersection of two or more components of $Y$ are singular, hence $e\in Y_1\setminus (Y_2\cup\cdots\cup Y_k)$ and $\hat T_e Y = \hat T_e Y_1$ is $d$-dimensional due to regularity. Further, $e$ lies in $Z_1$, a~$(d+1)$-dimensional component of $Z$, hence
    \[
        \dim \hat T_e Z \geq d+1 > d = \dim \hat T_e Y.
    \]
    Simultaneously, $\hat T_e Y = \hat T_e Z\cap E$ due to the scheme-theoretic intersection $Y=Z\cap \pe E$. The dimension count thus shows that $\hat T_eZ\cap E$ is a proper subspace of $\hat T_eZ$, i.e. $\hat T_eZ\nsubseteq E$.

    Thus, we may choose a vector $w\in \hat T_e Z\setminus E$. Since $E$ is a hyperplane in $E'$, this forces $E' = E+\ce w$. Due to $w\in\hat T_e Z \subseteq \hat T_e X$, we then conclude
    \[
        v \in E' = E+\ce w \subseteq E+\hat T_e X.
    \]
    This holds for each $e\in \tilde Y_1$, so
    \[
        v \in \bigcap_{e\in \tilde Y_1}(E+\hat T_e X),
    \]
    proving $v\in \ND_X(E)$.
\end{proof}

\begin{corollary}
    \label{cor:criterion}
    $E\subseteq \ND_X(E)$, and if equality occurs, then $E$ is $X$-unliftable.
\end{corollary}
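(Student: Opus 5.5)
The corollary has two parts, and I will prove them separately, using Lemma~\ref{lem:general} together with the fact that $E\subseteq U$.

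\emph{The inclusion $E\subseteq\ND_X(E)$.} This follows from the definition of $\ND_X(E)$. Every set $E+\hat T_eX$ contains $E$. So each intersection $\bigcap_{e\in\tilde Y_i}(E+\hat T_eX)$ contains $E$. If some $\tilde Y_i$ is empty, the convention makes that intersection the whole of $V$, which again contains $E$. Since $k\geq 1$, the union over $i$ also contains $E$. The case $k=0$ would mean $Y$ is empty. I will set it aside with the usual convention: the codimension condition only has content when $Y\neq\emptyset$. Alternatively, one can note that $E\subseteq U\subseteq\ND_X(E)$ by Lemma~\ref{lem:general}, and this gives the inclusion directly.

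\emph{Unliftability when $\ND_X(E)=E$.} I argue by contraposition. Suppose $E$ is $X$-liftable, so there is some $E'\supsetneq E$ with $\codim_{\pe E'}(X\cap\pe E')=m$. I want to produce a vector $v\notin E$ lying in $U$. Then Lemma~\ref{lem:general} gives $v\in\ND_X(E)=E$, a contradiction.

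The key step is to pass from an arbitrary lift $E'$ to a lift of dimension one more than $E$. I first record the monotonicity mentioned before Definition~\ref{def:NDX}: for any chain $E\subseteq F\subseteq E'$ we have $\codim_{\pe F}(X\cap\pe F)\geq m$. This comes from intersecting with hyperplanes. Each hyperplane section of a projective scheme drops the dimension by at most one. So going from $\pe E'$ down to $\pe E$ through $\dim E'-\dim E$ hyperplanes, each step lowers $\dim(X\cap\cdot)$ by at most one. Hence
\[
\dim(X\cap\pe F)\le \dim(X\cap\pe E)+(\dim F-\dim E),
\]
which is exactly the statement that the codimension is at least $m$.

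Now choose a full flag $E=F_0\subset F_1\subset\cdots\subset F_s=E'$ with $\dim F_j=\dim E+j$. Along this flag the quantity $\dim(X\cap\pe F_j)-j$ changes by at most one per step. By the monotonicity above it can never drop below $\dim Y$, which is its value at $j=0$. Applying the same hyperplane-section bound downward from $E'$ shows it cannot rise either. At the endpoints it equals $\dim(X\cap\pe E)=\dim Y$, since $E'$ satisfies the codimension condition. Concretely, the bound from $F_1$ down to $E$ and from $E'$ down to $F_1$ pins down $\dim(X\cap\pe F_1)=\dim Y+1$. So $F_1$ is itself a lift, and writing $F_1=E+\ce v$ with $v\notin E$ gives $v\in U\setminus E$, as required.

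The only point needing care is the hyperplane-section dimension bound for scheme-theoretic intersections, possibly with empty components. This is standard: a hyperplane section of a projective scheme of dimension $d$ has dimension at least $d-1$.
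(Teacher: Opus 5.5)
Your proof is correct and follows the paper's intended route, $E\subseteq U\subseteq\ND_X(E)$ via Lemma~\ref{lem:general}; your flag argument with the hyperplane-section bound also makes explicit the reduction from an arbitrary lift $E'\supsetneq E$ to a one-step lift $E+\mathbb{C}v$, which the paper leaves implicit because it defines $U$ only through one-dimensional extensions and gives no written proof of the corollary. One small slip: in words you have the two bounds' roles swapped (the monotonicity keeps $\dim(X\cap\mathbb{P}F_j)-j$ from rising above $\dim Y$, while the downward bound from $E'$ keeps it from dropping below), but your ``concretely'' sentence states the correct two-sided bound, which gives $\dim(X\cap\mathbb{P}F_1)=\dim Y+1$.
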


\begin{lemma}[{\cite[Lemma 9.]{draisma}}]
    \label{lem:secant-segre-tangent}
    The regular locus of $\sigma_{r}\Seg(\pe B\times\pe C)$ consists precisely of points $[e]$ corresponding to rank $r$ matrices $e$ and the tangent cone at $[e]$ is given by \[\hat T_{[e]}\sigma_{r}\Seg(\pe B\times\pe C) = \set{M\in \Hom(B^*, C)\mid M(\ker e)\subseteq \im e}.\]
\end{lemma}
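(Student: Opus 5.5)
We need to prove Lemma~\ref{lem:secant-segre-tangent}: the regular locus of $\sigma_r\Seg(\pe B\times\pe C)$ is the set of rank exactly $r$ points, and at such a point $[e]$ the affine tangent space is $\set{M \mid M(\ker e)\subseteq \im e}$. Throughout we assume $r<m$; otherwise the secant variety is all of $\pe(B\otimes C)$ and the statement is trivial.

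The plan rests on the standard description of the determinantal variety $\hat\sigma_r = \set{\rank\le r}$. It is irreducible of dimension $r(2m-r)$, being the image of the surjection $\Hom(\ce^r,C)\times\Hom(B^*,\ce^r)\to\hat\sigma_r$. Its ideal is generated by the $(r+1)\times(r+1)$ minors; this is the classical fact that determinantal ideals are prime, which we quote. Its rank-$r$ points form a single $\GL(B)\times\GL(C)$-orbit $\mathcal O_r$, which is open and dense. The proof then splits into three steps.

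\textbf{Step 1: the tangent space at a rank-$r$ point.} Fix $e$ of rank $r$ and choose bases so that $e=\mtrx{I_r&0\\0&0}$. Write $M=\mtrx{M_{11}&M_{12}\\M_{21}&M_{22}}$. The derivative at $e$ of an $(r+1)$-minor in the direction $M$ vanishes unless the minor uses the full $I_r$ block. For the minor with rows $[r]\cup\{i\}$ and columns $[r]\cup\{j\}$, with $i,j>r$, the derivative is exactly $(M_{22})_{ij}$. Hence the Zariski tangent space of the scheme cut out by the minors is $\set{M\mid M_{22}=0}$. This set has dimension $m^2-(m-r)^2=r(2m-r)=\dim\hat\sigma_r$, so $e$ is a regular point. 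Finally, in these bases $M_{22}=0$ says precisely that $M$ maps $\ker e=\gener{b_{r+1},\dots,b_m}$ into $\im e=\gener{c_1,\dots,c_r}$. This gives the stated formula, and it is basis-independent. As a cross-check, differentiating the parametrization $(X,Y)\mapsto XY$ gives $\dot XY+X\dot Y$, whose span is the same space.

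\textbf{Step 2: points of rank $<r$ are singular.} This is the main obstacle. For $e$ with $\rank e=s<r$, I would first try the same computation. Every $(r+1)$-minor vanishes to order at least $r+1-s\ge 2$ at $e$, because its Taylor expansion at $e$ only involves terms of degree $\ge r+1-s$ in $M$. Hence all differentials of the generators vanish, and the Zariski tangent space is all of $\Hom(B^*,C)$. Its dimension $m^2$ exceeds $r(2m-r)$ since $r<m$, so $e$ is singular.

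This argument needs the minors to generate the ideal, which is the cited primeness. Should I want to avoid that citation, I would instead argue via the secant structure: $\hat T$ at $e$ contains the tangent spaces at the nearby rank-$r$ points $e+f$, with $f$ of rank $r-s$ and $\im f\cap\im e=0$. Letting $f$ vary, the union of $\set{M\mid M(\ker(e+f))\subseteq \im(e+f)}$ spans everything, so the tangent cone is too large for $e$ to be regular.

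Combining Steps 1 and 2 with projectivization gives exactly the regular locus and tangent cone claimed.
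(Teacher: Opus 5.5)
The paper gives no proof of this lemma; it quotes Draisma's Lemma~9. Your proposal therefore supplies an argument the paper outsources, and the argument is correct.

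\textbf{Step 1.} This is the standard normal-form computation, and it needs no input about the determinantal ideal. The Zariski tangent space of the reduced variety $\hat\sigma_r$ at $e$ is contained in that of the scheme cut out by the $(r+1)$-minors. You computed the latter to be $\set{M\mid M_{22}=0}$, of dimension $r(2m-r)$. The former has dimension at least $\dim\hat\sigma_r=r(2m-r)$, so the two coincide.

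\textbf{Step 2.} The cited classical theorem is used only here. What is really needed is that the minors generate a radical ideal, so that your vanishing-order argument describes the variety and not a possibly non-reduced scheme.

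Your fallback avoids this citation once two points are made precise. First, the Zariski tangent space at $e$ contains every limit of tangent spaces at smooth points tending to $e$, by continuity of the differentials of the defining equations. Second, along $e+tf$ the tangent space is the constant space $\set{M\mid M(\ker e\cap\ker f)\subseteq \im e\oplus\im f}$ only if you also impose $\ker e+\ker f=B^*$. The condition $\im e\cap\im f=0$ alone does not give $\rank(e+f)=r$. The spanning claim then follows because any rank-one $M$ lies in such a space once $f$ is chosen with $\im M\subseteq\im e\oplus\im f$, which is possible since $r-s\geq 1$.

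\textbf{The case $r\ge m$.} Your opening reduction is slightly off. For $r=m$ the lemma is not trivial; its first clause is false, since $\sigma_m\Seg(\pe B\times\pe C)=\pe(B\otimes C)$ is smooth also at matrices of lower rank. The statement must be read with $r<m$, as the paper implicitly does.

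Compared with the bare citation, your route makes the lemma self-contained. It also isolates its one nontrivial input: radicality of determinantal ideals, or alternatively the limit-of-tangent-spaces argument.
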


\begin{proof}[Proof of Theorem~\ref{thrm:main-liftability}]
    We take $X:= \sigma_{r} \Seg(\pe B\times\pe C)$ in Corollary~\ref{cor:criterion}, weaken the statement by only considering points of $\tilde Y_i\cap X_\reg$ in place of $\tilde Y_i$ and apply Lemma~\ref{lem:secant-segre-tangent}.
\end{proof}

\section{Degenerate rank $1$ loci}\label{sec:rank-one-degeneracies}

A simple way for the rank $1$ locus to achieve degenerate geometric rank is for the whole $\pe T_A(A^*)$ to be contained inside the Segre variety.
Another simple example is when $B$ and $C$ are two-dimensional and $\pe T_A(A^*)=\pe^2$ is the space of symmetric $2\times2$ matrices.
There, the rank $1$ locus becomes the curve of rank $1$ symmetric matrices.
Both the rank $1$ locus (curve) and the rank $2$ locus (everything) achieve
\[
    \GR(T) = 1+1 = 0+2 = 2.
\]
The following theorem shows that the above two are essentially the only ways for the rank $1$ locus to achieve degenerate geometric rank:

\begin{theorem}
    \label{thrm:pawel}
    If the rank $1$ locus of $T\in A\otimes B\otimes C$ achieves $\GR(T)$ and $T$ is of degenerate geometric rank, at least one of the following holds:
    \begin{enumerate}[label={\textup{(\alph*)}}]
        \item $\pe T_A(A^*)$ contains a linear subspace of $\Seg(\pe B\times\pe C)$.
        \item There exists a quotient $A\surjto A'$ such that $T_A((A')^*)$ is equivalent up to a $\GL(B)\times\GL(C)$ action to the space of symmetric $2\times 2$ matrices.
        Moreover, the rank $2$ locus of $T$ also achieves $\GR(T)$.
    \end{enumerate}
\end{theorem}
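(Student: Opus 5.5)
We work with the condition $\codim_{\pe A^*}(Y^1_T)+1=\GR(T)<m$. Set $E:=T_A(A^*)$, $Y:=Y^1_T\cong \pe E\cap\Seg(\pe B\times\pe C)$ and $d:=\dim Y$. The hypothesis then reads $d\geq 1$; more precisely $d=m-\GR(T)$, which is at least $1$ and satisfies $d+1-\GR(T)\geq 1$. The plan is to take a maximal-dimensional irreducible component $Y_1$ of $Y$, which is a $d$-dimensional subvariety of $\Seg(\pe B\times\pe C)=\pe B\times\pe C$. We then study its linear span $L:=\gener{Y_1}\subseteq\pe E$. The key inequality is that $\GR(T)\leq \codim_{\pe E}(Y^{r}_T)+r$ for every $r$. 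We apply it with $r$ equal to the maximal rank attained on $\hat L$. Every matrix in $\hat L$ is a sum of rank $1$ matrices from $\hat Y_1$, and the dimension of the secant variety gives $\rank\leq$ something small when $Y_1$ is degenerate.

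First consider the two projections $p_B:Y_1\to\pe B$ and $p_C:Y_1\to\pe C$. If one of them, say $p_C$, has a fibre of dimension $\ge1$ over a generic point in its image, then $Y_1$ contains $\pe W\times\{[c]\}$ for some linear $W$. This is because a fibre $Z\times\{[c]\}$ with $Z\subseteq\pe B$ spans a linear space $\pe\gener{Z}\times\{[c]\}$ of rank-$1$ matrices lying in $\pe E$, since $E$ is linear. So case (a) holds with a positive-dimensional linear subspace. Now assume both projections are generically finite onto their images $Y_B\subseteq\pe B$ and $Y_C\subseteq\pe C$, each of dimension $d$. Then $\hat Y_1$ spans the image of $\hat Y_B\otimes\hat Y_C$-type tensors. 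Moreover, $\hat L$ consists of maps $B^*\to C$ with image in $\gener{\hat Y_C}$ and kernel containing $\gener{\hat Y_B}^\perp$. Let $b:=\dim\gener{\hat Y_B}$ and $c:=\dim\gener{\hat Y_C}$; then all of $\hat L$ has rank $\le\min(b,c)$. Applying the key inequality with $r=\min(b,c)$ gives
\[\GR(T)\le \codim_{\pe E}\pe L+\min(b,c)\le (m-1-\dim L)+\min(b,c).\]
Comparing this with $\GR(T)=m-d$ yields $\dim L+1\le\min(b,c)+d-1$, roughly. On the other hand, $\dim L+1=\dim\hat L$ is at least the dimension forced by $Y_1$ spanning $L$. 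A graph of a finite correspondence between the $d$-dimensional varieties $Y_B$ and $Y_C$ inside a linear space of matrices is very restrictive. The aim is to show $\dim\hat L\ge b+c-1$, with the extremal case $b=c=2$, $d=1$, $\dim\hat L=3$ forced.

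The main obstacle is this last linear-algebra/geometry bound. My plan is to use the fact that a linear space of matrices containing a $d$-dimensional family of rank-$1$ matrices with generically finite projections must have its rank-$1$ locus cut out by quadrics. I will try induction on $d$ by hyperplane sections: intersecting with a general hyperplane of $\pe E$ reduces $d$ and $m$ by one and preserves degeneracy. This reduces everything to the curve case $d=1$. There, a curve $Y_1\subseteq\pe L\cap\Seg$ that is not a line and has both projections finite gives $\hat L$ containing a rational normal curve-type family. A direct computation on its $2\times2$ minors then shows the minimal configuration is a conic in $\pe^2$, namely the symmetric $2\times2$ matrices up to $\GL_2\times\GL_2$. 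Any other configuration would give rank $\ge3$ elements generically and violate the inequality. Finally, for (b), take $A'$ dual to $\hat L\cong\ce^3$ and restrict to the $2$-dimensional subspaces of $B$ and $C$ spanned. For the moreover clause, note that $\codim(Y^2_T)\le\codim\pe L=m-3$. Then $\codim(Y^2_T)+2\le m-1$, and a count shows this equals $\GR(T)$ exactly when $d=1$ in the reduced situation. Otherwise a higher-dimensional $Y_1$ would produce case (a) during the induction.
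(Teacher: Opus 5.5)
Your outline has the same skeleton as the paper's proof. Placing $\pe L$ inside the rank-$\min(b,c)$ locus is the paper's inequality $(\dagger)$. A non-injective projection producing a line of rank-$1$ matrices in $\pe E$ is the paper's argument. The reduction to curves by general hyperplane sections is the paper's induction. But the step you call the main obstacle is the heart of the proof, and you do not supply it.

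The missing step is the lower bound $\dim\hat L\geq b+c-1$, which is the paper's $(\ast)$. Nothing you propose yields it:
\begin{enumerate}
\item The rank-$1$ locus of \emph{every} linear space of matrices is cut out by its $2\times2$ minors, so ``cut out by quadrics'' carries no information.
\item Nothing makes $Y_1$ rational or a ``rational normal curve-type family.'' A priori it is an arbitrary irreducible curve in $\pe B'\times\pe C'$ with $b,c$ unbounded.
\item Your closing claim misplaces the contradiction. Having rank-$3$ elements violates nothing; the contradiction is purely dimensional. The chain $b+c-1\leq\dim\hat L\leq\min(b,c)+1$ forces $\max(b,c)\leq2$. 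Then $Y_1$ is an irreducible plane section of $\pe^1\times\pe^1$ that is not a ruling line, i.e.\ a smooth conic.
\end{enumerate}

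The paper obtains $(\ast)$ from Hopf's theorem (Theorem~\ref{thrm:hopf}) applied to the restriction map $(B')^*\otimes(C')^*\to\hat L^*$. This map is nonzero on each $\beta\otimes\gamma$, because on $\hat Y_1$ that form equals $\beta(b)\gamma(c)$. Its zero set is a union of two proper closed subsets of the irreducible $\hat Y_1$; they are proper because $\pr_B(Y_1)$ and $\pr_C(Y_1)$ span $\pe B'$ and $\pe C'$.

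For curves there is an elementary alternative. The map $\hat L\to C'$, $M\mapsto M(\beta)$, is surjective for general $\beta$. A general hyperplane section of $\pr_B(Y_1)$ spans $\beta^\perp$, so the kernel contains at least $b-1$ independent rank-$1$ matrices. Without one of these arguments, the curve case, and hence the theorem, is unproved.

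Two further points need repair.
\begin{enumerate}
\item In the induction, what must survive a general hyperplane cut is not ``degeneracy'' but the fact that the rank-$1$ locus still achieves $\GR$. This holds because $Y^1_T\subseteq Y^r_T$ makes every $Y^r_T$ with $r\geq1$ positive-dimensional. So each loses exactly one dimension and all codimensions are preserved (Lemma~\ref{lem:cuttingdown}).
\item That same preservation is what proves the ``moreover'' clause when $d\geq2$: the rank-$2$ locus achieving $\GR$ of the cut-down tensor transfers back to $T$. Your substitute does not do this. The bound $\codim Y^2_T+2\leq m-1$ meets $\GR(T)=m-d$ only when $d=1$. The assertion that for $d\geq2$ the induction ``would produce case (a)'' is unjustified as written, since the induction can return case (b) for the cut-down tensor. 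The assertion is in fact true: a $d$-dimensional $Y_1$, $d\geq2$, whose general curve section is a smooth plane conic is a quadric hypersurface in $\pe^{d+1}$, hence contains lines. But it needs its own argument.
\end{enumerate}

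Finally, there is an off-by-one: comparing with $\GR(T)=m-d$ gives $\dim\hat L\leq\min(b,c)+d$, not $\min(b,c)+d-1$.
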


\begin{corollary}
    If $r=1$ is the only $r$ for which the rank $r$ locus of $T$ achieves $\GR(T)$ and $T$ is of degenerate geometric rank, then $\pe T_A(A^*)$ contains a linear subspace of $\Seg(\pe B\times\pe C)$.
\end{corollary}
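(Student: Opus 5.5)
The plan is to study the rank $1$ locus $Y:=Y^1_T\subseteq \pe A^*$ through its image $W:=\pe T_A(A^*)\cap\Seg(\pe B\times\pe C)$, which is isomorphic to $Y$ because $T$ is concise. Write $m=\dim A$ and $d=\dim Y$. The hypotheses say that $\GR(T)=\codim Y+1=m-1-d+1=m-d$, and that this is $<m$; they also say that $m-d\le r+\codim Y^r_T$ for every $r$. Degeneracy gives $d\geq 1$. Fix an irreducible component $W_1$ of $W$ of dimension $d$, and consider its two projections $\pi_B\colon W_1\to\pe B$ and $\pi_C\colon W_1\to\pe C$. If some fibre of $\pi_B$ is infinite, say over $[b]$, then it spans a linear subspace $\pe(b\otimes C_0)$ of $\pe T_A(A^*)$. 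The reason is that $\pe T_A(A^*)$ is linear and every point of $\pe(b\otimes C)$ is rank one. This lands us in (a). The same conclusion holds for fibres of $\pi_C$. So the main case is when both projections are generically finite. Then $\dim\pi_B(W_1)=\dim\pi_C(W_1)=d$.

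In that case, take the linear span $L:=\gener{W_1}\subseteq\pe T_A(A^*)$, which corresponds to a subspace $E_L=T_A((A')^*)$ for a quotient $A\surjto A'$. Put $B_0:=\gener{\pi_B(W_1)}$ and $C_0:=\gener{\pi_C(W_1)}$, so that $L\subseteq\pe(\hat B_0\otimes\hat C_0)$. The key step is to exploit the rank inequality for $r=\dim\hat B_0$ or $r=\dim\hat C_0$. Every matrix in $E_L$ has rank at most $k:=\min(\dim \hat B_0,\dim\hat C_0)$. Since $E_L$ has dimension $\dim L+1$, this gives $\codim Y^k_T\le m-1-\dim L$. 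Combined with minimality of $\GR$ at $r=1$, we get $m-d\le k+m-1-\dim L$, i.e.
\[
\dim L\le k+d-1 .
\]
Now I would lower-bound $\dim L$. A nondegenerate irreducible $d$-dimensional subvariety of $\Seg(\pe^{k_B-1}\times\pe^{k_C-1})$ whose projections are generically finite onto nondegenerate $d$-folds should span at least roughly $k_B+k_C-2+$(something in $d$). For example, for $d=1$ a curve of bidegree $(p,q)$ spans at least $p+q$ dimensions, with $p\ge k_B-1$ and $q\ge k_C-1$. Comparing the two bounds should force $k_B=k_C=2$ and $d=1$, with $W_1$ a conic of bidegree $(1,1)$ in $\pe^1\times\pe^1$ spanning a plane $L$. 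The plane spanned by a smooth $(1,1)$ curve in $\pe(\ce^2\otimes\ce^2)$ is, up to $\GL_2\times\GL_2$, the graph of an isomorphism; after a change of basis it is exactly the space of symmetric $2\times2$ matrices. The equality case of the inequality above then gives that the rank $2$ locus achieves $\GR(T)$, which is (b).

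The main obstacle is the lower bound on $\dim L$ in general dimension $d$. I would first try to reduce to curves by cutting $W_1$ with general linear sections of $\pe T_A(A^*)$. This loses $d-1$ in both $\dim W_1$ and $\dim L$, keeps the projections finite, and by Bertini keeps the curve irreducible and still spanning large subspaces of $B_0$ and $C_0$. For a curve $\Gamma\subseteq\pe^{a}\times\pe^{b}$ with nondegenerate projections, I would use the bound $\dim\gener{\Gamma}\ge a+b$. This comes from choosing points of $\Gamma$ one at a time: each new point should add a new direction in $B$ or in $C$, because of how a rank one tensor $b\otimes c$ sits relative to $\hat B'\otimes\hat C'$. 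Plugging $\dim L\ge \dim\gener\Gamma+(d-1)$ into the inequality gives $a+b\le\min(a,b)+1$, hence $\min(a,b)\le 1$. Together with finiteness of both projections, this forces $a=b=1$.
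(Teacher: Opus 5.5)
\textbf{There is a genuine gap.} Your skeleton is the paper's proof of Theorem~\ref{thrm:pawel}, and the corollary follows at once from that theorem: in alternative (b) the rank $2$ locus also achieves $\GR(T)$, which contradicts $r=1$ being the only such $r$. You should state that last step explicitly. The infinite-fibre case and the estimate $\dim L\le k+d-1$ are fine.

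The gap is in the step that carries the whole argument, the bound $\dim\gener{\Gamma}\ge a+b$ for an irreducible curve with nondegenerate projections (the paper's Lemma~\ref{lem:curve}). Your point-by-point justification does not prove it. A new point outside $\hat B'\otimes\hat C'$ does raise the span by one. But it may add a new direction in \emph{both} factors at once, so the count only gives $\dim\gener{\Gamma}\ge\max(a,b)$. On the conic $\set{[b\otimes b]}\subset\mathbb P(\mathbb C^2\otimes\mathbb C^2)$, for example, the second point already exhausts $B$ and $C$ while spanning only a line. With $\max(a,b)$ in place of $a+b$, your final comparison yields only $\lvert a-b\rvert\le 1$, which forces nothing.

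In fact the bound fails for the two-point set $\set{[e_1\otimes e_1],[e_2\otimes e_2]}$, so a correct proof must use irreducibility in an essential way. The paper does this via Hopf's theorem (Theorem~\ref{thrm:hopf}), applied to $(B')^*\otimes(C')^*\to E^*\hookrightarrow\Hom(\hat X,\mathbb C)$ with $\mathbb P E=\gener{X}$. There $\beta\otimes\gamma$ cannot vanish on $\hat X$, because its zero set is a union of two proper closed subsets of an irreducible variety. A vanishing-order argument at a general point of the normalization of $\Gamma$ would also work. Your bidegree heuristic is also false as stated: a $(2,2)$ elliptic quartic on $\Seg(\mathbb P^1\times\mathbb P^1)$ spans only $\mathbb P^3$.

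Your reduction to curves has a second unproved claim. To turn $\dim\gener{\Gamma}\le\dim L-(d-1)\le k$ into $a+b\le\min(a,b)+1$, you need $\gener{\pi_B(\Gamma)}=B_0$ and $\gener{\pi_C(\Gamma)}=C_0$ \emph{exactly}, not merely ``large subspaces''. Bertini gives irreducibility of $\Gamma$, not this. The claim can be proved by a dimension count. Compare the $\dim L$-dimensional family of sections $W_1\cap H$ with the at most $\dim B_0$-dimensional family of components of the divisors $\set{\beta\circ\pi_B=0}$. Then use $\dim L>\dim B_0$, which holds because $\pi_C$ is finite on $W_1$. But you must supply this argument.

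The paper sidesteps the issue entirely. Lemma~\ref{lem:cuttingdown} cuts the tensor $T$ itself by a general hyperplane. This preserves every $\codim Y^r_T$, and hence the hypothesis that $r=1$ achieves the geometric rank. One can then induct down to $d=1$ and rerun the curve argument with $B'$, $C'$ recomputed for the new curve.
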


We recall a useful theorem:
\begin{theorem}[Hopf]
    \label{thrm:hopf}
    Let $\phi: U \otimes V \to W$ be a linear map of $\ce$-vector spaces that is injective after restriction to subspaces of the form $s \otimes V$, $U \otimes t$ for nonzero $s$, $t$, in $U$, $V$ respectively.
    Then
    \[ \dim(\im(\phi)) \geq \dim(U) + \dim(V) -1. \]
\end{theorem}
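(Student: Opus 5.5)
The plan is the classical topological argument of Hopf. It uses the cohomology rings of projective spaces and products of projective spaces. Set $u:=\dim U$, $v:=\dim V$, and replace $W$ by $W_0:=\im(\phi)$, with $w:=\dim W_0$. Suppose for contradiction that $w\leq u+v-2$.

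First I construct a morphism. The hypothesis gives $\phi(s\otimes t)\neq 0$ for all nonzero $s\in U$, $t\in V$; in fact only this consequence of the injectivity assumptions is needed. Hence the assignment
\[
    f\colon \pe U\times\pe V\to \pe W_0,\qquad ([s],[t])\mapsto[\phi(s\otimes t)]
\]
is a well-defined morphism of projective varieties, since it has no base locus. It is the composition of the Segre embedding with the linear projection $\pe(U\otimes V)\dashrightarrow\pe W_0$ induced by $\phi$. The hypothesis says precisely that the centre $\pe\ker\phi$ of this projection misses $\Seg(\pe U\times\pe V)$.

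Next I compare cohomology classes. Let $h\in H^2(\pe W_0;\mathbb Z)$ be the hyperplane class, and write $H^*(\pe U\times\pe V;\mathbb Z)=\mathbb Z[a,b]/(a^u,b^v)$ with $a,b$ the pullbacks of the hyperplane classes from the two factors. The homogeneous coordinates of $f$ are bilinear forms in $(s,t)$, so $f^*\mathcal O(1)=\mathcal O(1,1)$ and therefore $f^*h=a+b$. Since $h^{w}=0$ in $H^*(\pe^{w-1})$, we get
\[
    0=f^*(h^{w})=(a+b)^{w}=\sum_{i}\binom{w}{i}a^i b^{w-i}.
\]

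The final step, which is the crux though it is short, is to show this sum is nonzero in $\mathbb Z[a,b]/(a^u,b^v)$. The monomials $a^ib^j$ with $0\le i\le u-1$ and $0\le j\le v-1$ form a $\mathbb Z$-basis of this ring. Because $w\leq (u-1)+(v-1)$, there is some $i$ with $0\le i\le u-1$ and $0\le w-i\le v-1$; for example $i=\min\set{w,u-1}$. The coefficient of that basis monomial is $\binom{w}{i}>0$, so $(a+b)^w\neq 0$, a contradiction. Hence $w\ge u+v-1$.

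An algebraic alternative avoids cohomology. Take a general linear subspace $L\subseteq\pe W_0$ of codimension $u+v-2$; if $w\le u+v-2$ it is empty. Then $f^{-1}(L)$ is empty, whereas it is cut out by $u+v-2$ divisors of class $(1,1)$ on a variety of dimension $u+v-2$. Such an intersection is nonempty because $(a+b)^{u+v-2}$ has positive degree $\binom{u+v-2}{u-1}$. I would present the cohomological version and remark on this one.
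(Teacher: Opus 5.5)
The paper gives no proof of this statement; it only cites \cite[Proposition 1.3]{smith}. Your proposal supplies a self-contained proof, and it is correct. The hypothesis is in fact equivalent to $\phi(s\otimes t)\neq 0$ for all nonzero $s,t$, since every element of $s\otimes V$ is a pure tensor. Hence $f$ is a morphism with $f^*\mathcal O(1)=\mathcal O(1,1)$. For $i=\min\{w,u-1\}$, the monomial $a^ib^{w-i}$ is a basis element of the free $\mathbb Z$-module $\mathbb Z[a,b]/(a^u,b^v)$, and its coefficient in $(a+b)^w$ is $\binom{w}{i}>0$, which contradicts $h^w=0$.

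The citation buys brevity. Your version buys self-containment, but it imports singular cohomology and Chern classes (or intersection numbers, in your alternative), which is more machinery than needed. The remark you make in passing already finishes the proof without any cohomology. Since $\mathbb P\ker\phi$ misses $\Seg(\mathbb PU\times\mathbb PV)$, a variety of dimension $u+v-2$ in $\mathbb P^{uv-1}$, the projective dimension theorem gives $(\dim\ker\phi-1)+(u+v-2)\le uv-2$. That is, $\dim\im\phi=uv-\dim\ker\phi\ge u+v-1$. This short version fits the algebro-geometric style of the surrounding section and would be the natural replacement for the citation.

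Both the statement and your argument tacitly require $U,V\neq 0$: you need $\mathbb PU\times\mathbb PV\neq\emptyset$ and $i\ge 0$. Without this the hypothesis is vacuous and the bound fails, e.g.\ for $U=0$ and $\dim V\ge 2$. This is harmless in the paper's use in Lemma~\ref{lem:curve}, where $B'$ and $C'$ are nonzero.
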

\begin{proof}
    See \cite[Proposition 1.3]{smith}.
\end{proof}

\begin{lemma}
    \label{lem:curve}
    Let $X$ be an irreducible curve contained in $\Seg(\pe B\times\pe C)\subset \pe(B\otimes C)$, with projections
    \[
        \pr_B: \Seg(\pe B\times \pe C)\to \pe B,\qquad \pr_C: \Seg(\pe B\times \pe C)\to \pe C.
    \]
    Then $\dim\gener X\geq \dim\gener{\pr_B(X)}+\dim\gener{\pr_C(X)}$.
\end{lemma}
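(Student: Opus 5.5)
The plan is to linearize the curve through the affine cones and apply Hopf's Theorem~\ref{thrm:hopf}. Write $B_X := \hat{\gener{\pr_B(X)}}\subseteq B$ and $C_X := \hat{\gener{\pr_C(X)}}\subseteq C$, of dimensions $b+1$ and $c+1$, where $b=\dim\gener{\pr_B(X)}$ and $c=\dim\gener{\pr_C(X)}$. Then $X\subseteq \Seg(\pe B_X\times\pe C_X)$, and it suffices to show $\dim \hat{\gener X}\geq b+c+1$.

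The natural tool is multiplication of sections. The curve $X$ carries line bundles $L_B:=\pr_B^*\O(1)$ and $L_C:=\pr_C^*\O(1)$. Restriction gives maps $B_X^*\to H^0(X,L_B)$ and $C_X^*\to H^0(X,L_C)$, and both are injective because $\pr_B(X)$ spans $\pe B_X$ and $\pr_C(X)$ spans $\pe C_X$. The Segre embedding corresponds to $L_B\otimes L_C$. The linear forms on $B\otimes C$ restricted to $X$ therefore factor through the multiplication map
\[
\phi: B_X^*\otimes C_X^*\to H^0(X, L_B\otimes L_C),
\]
and $\dim\hat{\gener X}$ equals the rank of the restriction map $(B\otimes C)^*\to H^0(X,L_B\otimes L_C)$. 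This rank is $\dim\im\phi$, since forms on $B\otimes C$ restrict to forms on $B_X\otimes C_X$ surjectively.

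Next we check Hopf's hypotheses. If $s\in B_X^*$ is nonzero and $t\in C_X^*$, then $s\cdot t=0$ as a section on $X$. Since $X$ is irreducible, $H^0$ of an integral scheme has no zero divisors between sections of line bundles, so $s|_X=0$ or $t|_X=0$. By injectivity of the two restriction maps, this forces $t=0$. Hence $\phi$ is injective on $s\otimes C_X^*$, and symmetrically on $B_X^*\otimes t$. Theorem~\ref{thrm:hopf} then gives $\dim\im\phi\geq (b+1)+(c+1)-1$, so $\dim\gener X\geq b+c$.

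The main point needing care is the irreducibility step. To be concrete, one can pass to the normalization of $X$ or work on the function field, where sections become elements of a field after trivializing. It is also worth confirming that a curve is not needed at all: the argument only uses that $X$ is irreducible and reduced, so we may take $X$ with its reduced structure.
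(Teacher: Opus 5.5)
Your proof is correct and follows essentially the same route as the paper: apply Hopf's theorem to the multiplication map $B_X^*\otimes C_X^*\to\{\text{functions on } X\}$, whose image has dimension $\dim\hat{\gener X}$, and verify Hopf's hypotheses using the irreducibility of $X$ (a product of two nonzero restricted linear forms cannot vanish identically on $X$). The only difference is in presentation. The paper takes the target of $\phi$ to be $E^*$ and checks nonvanishing through regular functions on the affine cone $\hat X$, whereas you work with sections of $\pr_B^*\O(1)\otimes\pr_C^*\O(1)$ on $X$.
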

\begin{proof}
    Let $B'\subset B$, $C'\subset C$, $E\subset B\otimes C$ be vector subspaces such that $\pe B' = \gener{\pr_B(C)}$, $\pe C' = \gener{\pr_C(X)}$ and $\pe E=\gener X$; clearly $E\subset B'\otimes C'$.
    Let $\Hom(X,\ce)$ denote the space of regular functions $\hat X\to \ce$ and consider the function $\phi:(B')^*\otimes (C')^*\to E^*$ defined by the diagram
    \[\begin{tikzcd}[column sep = 5em]
        (B')^*\otimes (C')^* \arrow[rr, bend right, "\phi"]\arrow[r, phantom, yshift=.75em, "{\scriptstyle\text{multiplication}}"]\arrow[r, phantom, "\simeq"] & (B'\otimes C')^*\arrow[r, phantom, yshift=.75em, "{\scriptstyle\text{restriction}}"]\arrow[r] & E^*\arrow[r, phantom, yshift=.75em, "{\scriptstyle\text{restriction}}"]\arrow[r, hook, swap, "\rho"] & \Hom(\hat X,\ce).
    \end{tikzcd}\]

    Now we wish to apply Hopf's theorem on $\phi$.
    We need to verify it is injective on subspaces of the form $s\otimes (C')^*$ or $(B')^*\otimes t$ for nonzero $s\in (B')^*$ and $t\in (C')^*$.
    This is equivalent to verifying that for nonzero elements $\beta\in(B')^*$ and $\gamma\in (C')^*$, the element $\phi(\beta\otimes\gamma)$ is nonzero in $E^*$.
    It suffices to verify that $\rho(\phi(\beta\otimes\gamma))\neq0$. Taking an arbitrary point $b\otimes c\in \hat X\subset \widehat\Seg(\pe B\otimes \pe C)$, we have
    \[
        \rho(\phi(\beta\otimes\gamma))(b\otimes c) = \beta(b)\cdot\gamma(c).
    \]
    Since $\pr_B(X)$ spans $\pe B'$, the locus of points $b\otimes c\in \hat X$ with $\beta(b)=0$ is a proper closed subset for each $\beta$.
    Analogously, the same happens for $\gamma$, hence the zero set of any single $\rho(\phi(\beta\otimes\gamma))$ is a union of two proper closed subsets of $\hat X$, which is irreducible, so this union cannot be the entirety of $\hat X$.
    Thus $\rho(\phi(\beta\otimes\gamma))$ is nonzero somewhere on $\hat X$.

    Applying Hopf's theorem now gives
    \[
        \dim E = \dim E^*\geq \dim(\im(\phi)) \stackrel{\text{Hopf}}{\geq} \dim(B')^*+\dim(C')^*-1 = \dim(B')+\dim(C')-1,
    \]
    which is equivalent to $\dim\pe E \geq \dim\pe B' + \dim\pe C'$, as we wished to prove.
\end{proof}

\begin{lemma}
    \label{lem:cuttingdown}
    Suppose that the rank $1$ locus of $T\in A\otimes B\otimes C$ achieves $\GR(T)$ and this locus is at least one-dimensional.
    If $T'$ is a restriction of $T$ induced by a projection $A\surjto A'$ onto a general hyperplane, the rank $1$ locus of $T'$ also achieves $\GR(T')$.
\end{lemma}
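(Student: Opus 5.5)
We compare the quantities $\codim_{\pe A^*}(Y^r_T)+r$ and $\codim_{\pe (A')^*}(Y^r_{T'})+r$ for every $r$. Set $n:=\dim A$, so that $\pe A^*=\pe^{n-1}$. The projection $A\surjto A'$ onto a general hyperplane dualizes to the inclusion $(A')^*\injto A^*$ of a general hyperplane $H$. Since $T'_{A'}$ is the restriction of $T_A$ to $(A')^*$, we get $Y^r_{T'}=Y^r_T\cap \pe H$ for every $r$. In particular $\pe H$ is a general hyperplane of $\pe A^*$, and $\dim\pe(A')^* = n-2$.

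First I will control $r=1$. By hypothesis $\dim Y^1_T\geq 1$. A general hyperplane meets every component of positive dimension properly, so $\dim Y^1_{T'}=\dim Y^1_T-1$. Therefore
\[
    \codim_{\pe (A')^*}(Y^1_{T'}) = (n-2)-(\dim Y^1_T-1)=\codim_{\pe A^*}(Y^1_T).
\]
Hence the $r=1$ value for $T'$ equals $\GR(T)$.

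Next I will handle the other values of $r$. For an arbitrary $r$, intersecting with a general hyperplane lowers the dimension of $Y^r_T$ by exactly one if $\dim Y^r_T\geq 1$. It makes the locus empty if $Y^r_T$ is finite and nonempty. It leaves the locus empty if it was already empty. In the first case the codimension is preserved. In the other two cases $Y^r_{T'}=\emptyset$. With the convention $\codim\emptyset=\dim+1$ (or $\infty$), the value for $T'$ is then at least its value for $T$. Indeed, a finite nonempty $Y^r_T$ has codimension $n-1$, while the empty set in $\pe^{n-2}$ has codimension $n-1$; so the value cannot drop. Thus for every $r$,
\[
    \codim_{\pe (A')^*}(Y^r_{T'})+r\ \geq\ \codim_{\pe A^*}(Y^r_T)+r\ \geq\ \GR(T).
\]

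Combining the two steps, $\GR(T')=\min_r\bigl(\codim(Y^r_{T'})+r\bigr)=\GR(T)$, and the minimum is attained at $r=1$, which is the claim.

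The step needing the most care is the general-hyperplane dimension statement. Since $Y^r_T$ may be reducible, I will apply the standard fact componentwise. A general hyperplane avoids any chosen finite set of points, so it misses all zero-dimensional components. It meets each positive-dimensional component in pure dimension one less. The same hyperplane works simultaneously for the finitely many $r\le m$, since we are intersecting finitely many general (Zariski open, dense) conditions. Finally, if the paper's convention for the codimension of the empty locus differs (for instance, taking the minimum only over nonempty loci), the inequality only improves, so the argument is unaffected.
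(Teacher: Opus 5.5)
Your argument is correct and is essentially the paper's: both rest on a general hyperplane lowering the dimension of every positive-dimensional rank locus by exactly one. The only difference is that the paper gets equality of codimensions for all $r\geq 1$ at once from $Y^1_T\subseteq Y^r_T$ (which also shows your ``finite'' and ``empty'' cases never occur for $r\geq 1$), while you prove equality at $r=1$ and non-decrease elsewhere.

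There is one small slip. Under the convention $\codim\emptyset=\dim+1$, the empty-to-empty case does lower the value, from $n$ to $n-1$. This case actually occurs at $r=0$, because $T_A$ is injective, so your first inequality fails there. The bound you need still holds at $r=0$, since $\GR(T)\leq\codim Y^1_T+1\leq (n-2)+1=n-1$.
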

\begin{proof}
    If suffices to show that $\codim_{\pe A^*}Y^r_{T} = \codim_{\pe (A')^*}Y^r_{T'}$ for each $r\geq1$, since all inequalities between the minimands contributing to $\GR(T)$ will be preserved upon passing to $T'$.

    A positive-dimensional irreducible projective variety $X\subset \pe^N$ intersects a general hyperplane $H\subset\pe^N$ with $\dim(X\cap H) = \dim X-1$.
    Since $Y^1_T\subseteq Y^r_T$ for all $r$ and $\dim Y^1_T\geq1$, each rank locus has a positive-dimensional irreducible component.
    Therefore, cutting with a general hyperplane $H = \pe(A')^*$ will yield
    \[
        \codim_{\pe (A')^*}Y^r_{T'} = \codim_{H}\zav{Y^r_T\cap H} = (\dim\pe A^*-1) - \zav{\dim Y^r_T-1} = \codim_{\pe A^*}Y^r_T
    \]
    as we wished to prove.
\end{proof}

Now we prove the main result.

\begin{proof}[Proof of Theorem~\ref{thrm:pawel}]
    Note that because of the degenerate geometric rank, the locus $\pe T_A(A^*)\cap \Seg(\pe B\times \pe C)$ must be positive-dimensional.
    First, we consider the case of $\dim(\pe T_A(A^*)\cap \Seg(\pe B\times \pe C))=1$.
    Let $X$ be any one-dimensional irreducible component of $\pe T_A(A^*)\cap \Seg(\pe B\times \pe C)$ and denote $\pe E:=\gener X$.
    We equip $\Seg(\pe B\times\pe C)$ with projections $\pr_B$, $\pr_C$ as before and denote $\pe B':=\gener{\pr_B(X)}$, $\pe C' := \gener{\pr_C(X)}$.
    Then Lemma~\ref{lem:curve} yields
    \begin{equation}
        \tag{$\ast$}
        \dim\pe E \geq \dim\pe B' + \dim\pe C'.
    \end{equation}

    We extract a different inequality from the rank $1$ locus achieving $\GR(T)$. We have
    \[
        \GR(T) = \codim_{\pe A^*} Y^1_T + 1 = \dim\pe A^*,
    \]
    since by assumption the rank $1$ locus is one-dimensional.
    On the other hand, $\pe E$ is contained both in $\pe T_A(A^*)$ and $\pe(B'\otimes C')$.
    Assuming $\dim B'\leq \dim C'$, $B'\otimes C'$ is a space of matrices of bounded rank $\dim B'$.
    Thus $\pe E$ is contained within the rank $\dim B'$ locus, hence
    \[
        \codim_{\pe A^*}Y^{\dim B'}_T+\dim B' = \dim\pe A^* - \dim Y^{\dim B'}_T +\dim B' \leq \dim \pe A^* -\dim\pe E +\dim B'.
    \]
    Since the left hand side is an upper bound on $\GR(T)$, we conclude
    \begin{align}
        \nonumber
        \dim \pe A^* &\leq \dim\pe A^* - \dim\pe E + \dim \pe B' + 1,\\
        \tag{$\dag$}
        \dim \pe E &\leq \dim\pe B'+1.
    \end{align}
    Putting ($\ast$) and ($\dag$) together, we extract
    \[
        \dim\pe B'+1 \geq \dim\pe B'+\dim\pe C',
    \]
    i.e. $1\geq\dim\pe C'\geq\dim\pe B'$.
    Furthermore, we must have $\dim\pe(B'\otimes C')\geq\dim X = 1$, which implies at least one of $\pe B'$, $\pe C'$ is one-dimensional.
    Thus $\dim\pe C' = 1$ and $\dim\pe B'\in\set{0,1}$.

    If $\dim\pe B' = 0$, then both sides of the inclusion $X\subset\pe (B'\otimes C')$ are one-dimensional irreducible subvarieties of $\pe(B\otimes C)$, so we see $X=\pe(B'\otimes C')$.
    Hence $\pe T_A(A^*)$ indeed contains a linear subspace of the Segre variety, as posited by (a).

    If $\dim\pe B' = 1$, we assume $\pe E$ does not contain a subspace of the Segre variety and prove that it is, up to the action of $\GL(B)\times\GL(C)$, the space of $2\times2$ matrices, as posited by (b).
    First, we observe that the restrictions of $\pr_B$, $\pr_C$ to $X$ are injective: if $\pr_C$ restricted to $X$ is not injective, we can find two distinct $[b_1\otimes c], [b_2\otimes c]\in X$.
    Then $\pe E$ contains $\gener{[b_1\otimes c], [b_2\otimes c]} = \Seg(\gener{[b_1], [b_2]}\times [c])$, which is a linear subspace of the Segre variety, a~contradiction.

    Since restrictions $\pr_B\vert_X$, $\pr_C\vert_X$ are injective projective homomorphisms and $\dim X = \dim \pe B' = \dim\pe C' = 1$, they must be surjective, so $X$ constitutes the graph of an isomorphism $\pe^1 = \pe B' \simeq \pe C' = \pe^1$.
    Such an isomorphism is given by an invertible $2\times 2$ matrix modulo scalars (see e.g. \cite[15.5.A]{rising-sea}).
    By considering appropriate bases of $B'$ and $C'$, we may assume the isomorphism is given by the identity.
    Then, $X$ is the curve of rank $1$ symmetric matrices inside $\Seg(\pe^1\times\pe^1)$ and its span $\pe E$ becomes the subspace of all symmetric $2\times 2$ matrices, which is what we wished to prove.

    Moreover, since we reached an equality in ($\dag$) and we have $\dim B'=2$, equality must be occurring in
    \[
        \codim_{\pe A^*}(Y^2_T)+2 \geq \codim_{\pe A^*}Y^1_T + 1,
    \]
    which means the rank $2$ locus also achieves $\GR(T)$.

    \bigskip
    Now, we prove the general case by induction on $\dim(\pe T_A(A^*)\cap \Seg(\pe B\times\pe C))$.
    We choose a general hyperplane $H\subset B\otimes C$ and the restriction $T'$ of $T$ induced $A\surjto A'$, where $(A')^* = T_A^{-1}(T_A(A^*)\cap H)$. 
    By Lemma~\ref{lem:cuttingdown}, the rank $1$ locus of $T'$ still achieves $\GR(T')$,
    while the dimension of $\pe T'_A((A')^*) \cap \Seg(\pe B\times \pe C)$ decreases by one.
    
    By the inductive hypothesis, the theorem holds for $T'$.
    Therefore, there is a linear subspace of $\Seg(\pe B\times \pe C)$ or a subspace equivalent to $2\times 2$ matrices contained in $\pe T'_A((A')^*) = \pe T_A((A')^*)$, so it is also contained in the larger space $\pe T_A(A^*)$.
    Lastly, we know that by cutting with a general hyperplane, the dimensions of all rank loci decreased by the same amount. Hence if the conclusion (b) occurred for $T'$, meaning that the rank $2$ locus achieved $\GR(T')$, the same must happen for $T$.
\end{proof}

\section{Examples of nonlinearly degenerate geometric rank}
\label{sec:examples}

One of our goals has been to find examples of tensors with degenerate geometric rank where the degeneracy is achieved by some locus that is not just a linear space of matrices of bounded rank. We provide examples we know.

\subsection{Matrix multiplication}

\begin{example}[$M_{\gener n}$ is $rn$-unliftable]
    \label{ex:matmult}
    Consider the space of matrices $E$ corresponding to the multiplication of $n\times n$ matrices. This consists of matrices of the form
    \[
        e_A := \mtrx{A&&&\\&A&&\\&&\ddots&\\&&&A} \in \Hom(\ce^{n^2}, \ce^{n^2}) \text{ for $A\in\Hom(\ce^n, \ce^n)$.}
    \]
    Clearly, $\rank(e_A) = n \rank(A)$, so only ranks $rn$ for $r=\rank(A)\in\set{0,\dots,n}$ occur.

    The locus $Y^{rn}:=Y^{rn}_{M_{\gener n}}$ of matrices of rank at most $rn$ in $\pe E$ corresponds to $\rank(A)\leq r$.
    Thus, it is a copy of $\sigma_{r}\Seg(\pe^{n-1}\times\pe^{n-1})$ embedded linearly in $\pe(\Hom(\ce^{n^2}, \ce^{n^2}))$.
    The locus is irreducible and has codimension $(n-r)^2$ in $\pe\Hom(\ce^n,\ce^n)\simeq \pe E$, so it contributes $(n-r)^2+rn = n^2-rn+r^2$ to the minimum that defines $\GR(M_{\gener n})$.
    This attains its minimum $\GR(M_{\gener n})=\ceil{\frac34n^2}$ at $r=\floor{\frac n2}$ and $r=\ceil{\frac n2}$ (cf. \cite[Theorem 6.1]{kopparty-moshkovitz-zuiddam}).

    Let us show that for every $0<r<n$, the space $E$ is $rn$-unliftable.
    Since $Y^{rn}$ is irreducible and its regular locus consists of $e_A$ for $\rank(A)=r$, we just need to exhibit a collection of $e_A$'s such that their corresponding $E+\hat T_{[e_A]}\sigma_{rn}\Seg(\pe^{n^2-1}\times\pe^{n^2-1})$ intersect to $E$.
    We pick all possible matrices $A$ that contain $r$ entries $1$, at most one in each in row and column, and zeros everywhere else.
    To see we reach $E$ as the intersection, view $n^2\times n^2$ matrices written using $n\times n$ blocks of size $n\times n$.
    Then $\hat T_{[e_A]}\sigma_{rn}\Seg(\pe^{n^2-1}\times\pe^{n^2-1})$ allows arbitrary entries in any row or column with a one, whereas $E$ will force the remaining entries to either be zero if they are in an off-diagonal block, or to agree across different diagonal blocks.
    Taking the conjunction of such conditions $E+\hat T_{[e_A]}\sigma_{rn}\Seg(\pe^{n^2-1}\times\pe^{n^2}-1)$ over all choices of $A$, we see that off-diagonal blocks must be zero and the diagonal blocks must be the same, which is exactly the description of $E$.

    We illustrate with an example for $n=2$, $r=1$: our chosen matrices $e_A$ will be
    \begin{align*}
        e_1 &= \mtrx{1&0&&\\0&0&&\\&&1&0\\&&0&0}, &
        e_2 &= \mtrx{0&1&&\\0&0&&\\&&0&1\\&&0&0}, \\
        e_3 &= \mtrx{0&0&&\\1&0&&\\&&0&0\\&&1&0},&
        e_4 &= \mtrx{0&0&&\\0&1&&\\&&0&0\\&&0&1}.
    \end{align*}
    We compute
    \begin{align*}
        E+\hat T_{[e_1]}\sigma_2\Seg(\pe^3\times\pe^3) &= \mtrx{x&y&&\\z&w&&\\&&x&y\\&&z&w} + \mtrx{*&*&*&*\\ *&0&*&0\\ *&*&*&*\\ *&0&*&0} = \mtrx{*&*&*&*\\ *&w&*&0\\ *&*&*&*\\ *&0&*&w}
    \end{align*}
    and analogously
    \begin{align*}
        E+\hat T_{[e_2]}\sigma_2\Seg(\pe^3\times\pe^3) &= \mtrx{
            *&*&*&*\\
            z&*&0&*\\
            *&*&*&*\\
            0&*&z&*
        },\\
        E+\hat T_{[e_3]}\sigma_2\Seg(\pe^3\times\pe^3) &= \mtrx{
            *&y&*&0\\
            *&*&*&*\\
            *&0&*&y\\
            *&*&*&*
        },\\
        E+\hat T_{[e_4]}\sigma_2\Seg(\pe^3\times\pe^3) &= \mtrx{
            x&*&0&*\\
            *&*&*&*\\
            0&*&x&*\\
            *&*&*&*
        }.
    \end{align*}
    Then, intersecting gives
    \[
    \RND_2(E) \subseteq
    \mtrx{x&y&&\\z&w&&\\&&x&y\\&&z&w} = E.
    \]
\end{example}

\subsection{Structure tensors}

\begin{example}[octonions]
Let $T_{\oct}\in \ce^8\otimes\ce^8\otimes\ce^8$ be the structure tensor of the complexified octonions $\oct$.
Explicitly, the corresponding space of matrices may be written as
\[
    \mtrx{
        x_0 &  x_1 &  x_2 &  x_3 &  x_4 &  x_5 &  x_6 &  x_7\\
        x_1 & -x_0 &  x_3 & -x_2 &  x_5 & -x_4 & -x_7 &  x_6\\
        x_2 & -x_3 & -x_0 & -x_1 &  x_6 &  x_7 & -x_4 & -x_5\\
        x_3 &  x_2 & -x_1 & -x_0 &  x_7 & -x_6 &  x_5 & -x_4\\
        x_4 & -x_5 & -x_6 & -x_7 & -x_0 &  x_1 &  x_2 &  x_3\\
        x_5 &  x_4 & -x_7 &  x_6 & -x_1 & -x_0 & -x_3 &  x_2\\
        x_6 &  x_7 &  x_4 & -x_5 & -x_2 &  x_3 & -x_0 & -x_1\\
        x_7 & -x_6 &  x_5 &  x_4 & -x_3 & -x_2 &  x_1 & -x_0
    }.
\]
A nonzero element of this space either has rank $4$ if $\sum x_i^2 = 0$, or $8$ otherwise.
In particular, it means the rank $4$ locus is a hypersurface, which results in $\GR(T_{\oct}) = 1+4 = 5$.
Note that the rank $4$ locus has a neat algebraic interpretation: the equation defining the hypersurface is the octonion \emph{norm}, which is multiplicative, despite the octonions being nonassociative.

Computationally, we have verified that the rank $4$ locus is unliftable.
\end{example}

The octonions can be viewed as a member of the larger family of Cayley-Dickson algebras, produced inductively by using an $n$-dimensional algebra to construct a $2n$-dimensional algebra.
In the classical setting over the reals, the sequence starts with real numbers, complex numbers, quaternions, octonions, and so on.
The $2\times2$ matrix multiplication tensor comes up here, because when passing to the complex numbers, the quaternions $\mathbb H\otimes_{\er}\ce$ are isomorphic to the algebra of $2\times 2$ matrices.
Unfortunately though, we have not observed degenerate geometric rank with higher Cayley-Dickson algebras. We note that starting with the $16$-dimensional \emph{sedenions}, the norm function given by a sum of squares stops being multiplicative.

\subsection{Minimal border rank}

Tensors of minimal border may be good source of examples of degenerate geometric rank.
In $\ce^m\otimes\ce^m\otimes\ce^m$ for $m\leq 5$, these have been classified in \cite[Section 4]{jagiella-jelisiejew}.
Going through the list and adopting their notation, we see nonlinear degenerate rank loci for the following eight tensors; for each, we list the loci for all nontrivial ranks which occur in the space.
\begin{itemize}
    \item The tensor
    \[
        T_{1,10} = \mtrx{
            x_0 & 0 & 0 & 0 & 0 \\
            0 & x_0 & 0 & 0 & 0 \\
            0 & 0 & x_0 & 0 & 0 \\
            0 & x_1 & x_3 & x_0 & 0 \\
            0 & x_2 & x_4 & 0 & x_0 \\
        }
    \]
    has a rank $2$ locus given by the ideal $(x_0)$ and a rank $1$ locus given by $(x_0, x_2x_3-x_1x_4)$.
    The geometric rank is $3$ and it is achieved by both of the above loci.

    \item The tensor
    \[
        T_{1,11} = \mtrx{
            x_0 & 0 & 0 & 0 & 0 \\
            0 & x_0 & 0 & 0 & 0 \\
            0 & 0 & x_0 & 0 & 0 \\
            x_2 & x_3 & x_4 & x_0 & 0 \\
            x_1 & x_2 & x_3 & 0 & x_0
        }
    \]
    has a rank $2$ locus given by $(x_0)$ and a rank $1$ locus given by $(x_0, x_3^2-x_2x_4, x_2x_3-x_1x_4, x_2^2-x_1x_3)$.
    The geometric rank is $3$ and it is achieved solely by the linear rank $2$ locus, with the rank $1$ locus only contributing $4$ to the geometric rank.

    \item The tensor
    \[
        T_{1,12} = \mtrx{
            x_0 & 0 & 0 & 0 & 0 \\
            0 & x_0 & 0 & 0 & 0 \\
            0 & 0 & x_0 & 0 & 0 \\
            x_1 & 0 & x_4 & x_0 & 0 \\
            x_2 & x_3 & x_1 & 0 & x_0
        }
    \]
    has a rank $2$ locus given by $(x_0)$ and a rank $1$ locus given by $(x_0, x_3x_4, x_1x_3, x_1^2-x_2x_4)$.
    This rank $1$ locus thus has two components $(x_0,x_3, x_1^2-x_2x_4)$ and $(x_0, x_1,x_4)$.
    The geometric rank is $3$ and it is achieved solely by the linear rank $2$ locus, with the rank $1$ locus only contributing $4$ to the geometric rank.

    \item The tensor
    \[
        T_{1,19} = \mtrx{
            x_0 & 0 & 0 & 0 & 0 \\
            0 & x_0 & 0 & 0 & 0 \\
            x_1 & 0 & x_0 & 0 & 0 \\
            x_3 & x_4 & 0 & x_0 & 0 \\
            x_2 & x_3 & x_1 & 0 & x_0
        }
    \]
    has a rank $3$ locus given by $(x_0)$, a rank $2$ locus given by $(x_0, x_1x_4)$ and a rank $1$ locus given by $(x_0,x_1,x_3^2-x_2x_4)$.
    The geometric rank is $4$, achieved by all three of the above loci.

    \item The tensor
    \[
        T_{2,7} = \mtrx{
            x_0 & 0 & 0 & 0 & 0 \\
            0 & x_0 & 0 & 0 & 0 \\
            x_1 & x_2 & x_0 & 0 & 0 \\
            x_3 & -x_1 & 0 & x_0 & 0 \\
            0 & 0 & 0 & 0 & x_0 + x_4
        }
    \]
    has a rank $3$ locus given by $(x_0)$, a rank $2$ locus given by $(x_0, x_1^2x_4+x_2x_3x_4)$ and a rank $1$ locus given by $(x_0, x_3x_4,x_2x_4, x_1x_4, x_1^2+x_2x_3)$.
    The rank $2$ locus has two components $(x_0,x_4)$ and $(x_0,x_1^2+x_2x_3)$, while the rank $1$ locus has two components $(x_0,x_4,x_1^2+x_2x_3)$ and $(x_0,x_1,x_2,x_3)$.
    The geometric rank is $4$, achieved by all three of the above loci.

    \item The tensor
    \[
        T_{\O_{58}} = \mtrx{
            x_0 & 0 & x_1 & x_2 & x_4 \\
            x_4 & x_0 & x_3 & -x_1 & 0 \\
            0 & 0 & x_0 & 0 & 0 \\
            0 & 0 & - x_4 & x_0 & 0 \\
            0 & 0 & 0 & x_4 & 0
        }.
    \]
    as a space of matrices is of bounded rank $4$.
    It further has a rank $2$ locus $(x_0,x_4)$ and a rank $1$ locus $(x_0,x_4,x_1^2+x_2x_3)$.
    The geometric rank is $4$, achieved by all three of the above loci.

    \item The tensor
    \[
        T_{\O_{57}} = \mtrx{
            x_0 & 0 & x_1 & x_2 & x_4 \\
            0 & x_0 & x_3 & -x_1 & 0 \\
            0 & 0 & x_0 & 0 & 0 \\
            0 & 0 & 0 & x_0 & 0 \\
            0 & 0 & 0 & x_4 & 0
        }
    \]
    as a space of matrices is of bounded rank $4$.
    It further has a rank $3$ locus given by $(x_0)$, a rank $2$ locus given by $(x_0,x_3x_4)$ and a rank $1$ locus given by $(x_0,x_4,x_1^2+x_2x_3)$.
    The geometric rank is $4$, achieved by all four of the above loci.

    \item The tensor
    \[
        U_{2,7} = \mtrx{
            x_0 & 0 & 0 & 0 \\
            0 & x_0 & 0  & 0 \\
            x_1 & x_2 & x_0 & 0 \\
            x_3 & -x_1 & 0 & x_0
        }
    \]
    has a rank $2$ locus $(x_0)$ and a rank $1$ locus $(x_0, x_1^2+x_2x_3)$.
    The geometric rank is $3$, achieved by both of the above loci.
\end{itemize}

Overall, we find no examples of degenerate geometric rank being achieved solely by a nonlinear locus, as there is always a linear locus which achieves the geometric rank jointly with the nonlinear locus.

We also note that the examples are consistent with the result of Section~\ref{sec:rank-one-degeneracies}: in each case when the rank $1$ locus achieves the geometric rank, it either contains a linear space, or it contains a copy of $\pe S^2\ce^2 \cap \Seg(\pe^1\times\pe^1)$, in which case the rank $2$ locus also achieves the geometric rank, as Theorem~\ref{thrm:pawel} predicts.

\subsection{Representation-theoretic constructions}

We give an example coming from the representation theory of $\SL_3$.

\begin{example}
The following tensor in $\ce^6\otimes\ce^6\otimes\ce^6$ is the unique up to scale $\SL_3$-invariant tensor in $S_2\ce^3$:
\[
    E_6 := \mtrx{
            0  &   0    & 0 &  2x_5 & 2x_4 & 2x_3 \\
            0 & 2x_5   & x_4 &    0 &  -x_2 &  -x_1 \\
            0 &  -x_4 & -2x_3 &  -x_2 &  -x_1 &    0 \\
        2x_5  &   0 & -2x_2   &  0  &   0 & 2x_0 \\
        -x_4  & -x_2  &  x_1  &   0 & 2x_0 &    0 \\
        2x_3 & 2x_1  &   0 & 2x_0 &    0  &   0
    }.
\]
The tensor corresponds to the $\SL_3$-equivariant map $S_2\ce^3\to \Hom(S_2\ce^3, S_2\ce^3)$.
In basis, we can construct the map by regarding $S_2\ce^3$ as quadratic forms in variables $y_1,y_2,y_3$, with the description above using the correspondence $(x_0,\dots,x_5)\approx(y_1^2, y_1y_2, y_1y_3,y_2^2,y_2y_3,y_3^2)$.
Then, for a monomial $y_ay_b\in S_2\ce^3$, we compute what the monomial $y_iy_j$ maps to under the map $\phi\in\Hom(S_2\ce^3, S_2\ce^3)$ induced by $y_ay_b$.
We consider the two possible pairings $\set{a,i},\set{b,j}$ and $\set{a,j},\set{b,i}$, each contributing a possibly zero term to $\phi(y_iy_j)$.
For the first pairing, if $a=i$ or $b=j$, the pairing contributes $0$.
Otherwise, $\set{a,i}$ and $\set{b,j}$ are both two-element subsets of $\set{1,2,3}$, so let
\[
    \set{s} := \set{1,2,3}\setminus\set{a,i},\qquad \set{t}:=\set{1,2,3}\setminus\set{b,j},
\]
then the pairing contributes the term
\[
    -\sgn((ais))\sgn((bjt))y_sy_t
\]
to the sum, where $\sgn$ denotes the sign of a permutation.
The pairing $\set{a,j},\set{b,i}$ also contributes terms analogously.

As an example, we compute the image of $y_1y_3$ under the map induced by $y_1y_2$:
\[
    \zav{\text{contribution of $\set{1,1},\set{2,3}$}} + \zav{\text{contribution of $\set{1,3},\set{2,1}$}} = 0 + \zav{-\sgn(132)\sgn(213)y_2y_3} = -y_2y_3,
\]
justifying the entry $-x_1$ in the third row and fifth column.

Similarly, the image of $y_3^2$ under the map induced by $y_1y_2$ is:
\[
    2\cdot\zav{\text{contribution of $\set{1,3},\set{2,3}$}} = 2\cdot (-\sgn((132))\sgn((231))y_2y_1) = 2y_1y_2,
\]
justifying the entry $2x_1$ in the sixth row and second column.

As a space of matrices, $E_6$ has a rank $4$ locus given by the degree $3$ hypersurface $(x_2^2x_3 - x_1x_2x_4 + x_0x_4^2 + x_1^2x_5 - 4x_0x_3x_5)$, which is actually the second secant variety of the degree two Veronese embedding of $\pe^2$; and a rank $3$ locus given by
\[
    (x_4^2 - 4x_3x_5,\ x_2x_4 - 2x_1x_5,\ 2x_2x_3 - x_1x_4,\ x_2^2 - 4x_0x_5,\ x_1x_2 - 2x_0x_4,\ x_1^2 - 4x_0x_3),
\]
the degree two Veronese embedding of $\pe^2$.
Of these, only the rank $4$ locus achieves $\GR(E_6) = 1+4 = 5$.

By a computer calculation of the liftability criterion, we have verified that $E_6$ is $4$-unliftable in all flattenings.
\end{example}

\section*{Acknowledgments}

This project began at the Workshop on the Applications of Commutative Algebra at the Fields Institute in Toronto in May 2025. We are grateful to its organizers, to J.M. Landsberg and Claudiu Raicu as leaders of the focus group on Tensors and to all of our fellow group members.
We also thank Amichai Lampert and Mateusz Michałek for helpful comments.

This work has been supported by European Union’s HORIZON–MSCA-2022-DN-JD programme under the Horizon Europe (HORIZON) Marie Skłodowska-Curie Actions, grant agreement 101120296 (TENORS).

\bibliographystyle{alpha}
\bibliography{main}
\end{document}